\documentclass[12pt,leqno,draft]{article}
\usepackage{amsfonts}
\usepackage{amsmath, amsthm, amsfonts, amssymb, color}
\usepackage{mathrsfs}
\usepackage{color}
\usepackage[notcite,notref]{showkeys}

\newtheorem{thm}{Theorem}[section]

\newtheorem{lem}[thm]{Lemma}
\newtheorem{prp}[thm]{Proposition}

\newtheorem{rem}[thm]{Remark}
\theoremstyle{definition}
\newtheorem{defn}{Definition}[section]
\newcommand{\scr}[1]{\mathscr #1}
\definecolor{wco}{rgb}{0.5,0.2,0.3}

\numberwithin{equation}{section} \theoremstyle{remark}

\newcommand{\ua}{\uparrow}

\title{{\bf
  Asymptotic Log-Harnack Inequality for Degenerate SPDEs  with Reflection}\footnote{
 Supported in
 part by  National Key R\&D Program of China (No. 2022YFA1006000, 2022YFA1006001) and NNSFC (12131019, 12371151, 12426655, 12531007, 12571158). } }
\author{
{\bf   Qi Li$^{(a)}$,  Feng-Yu Wang$^{(b)}$,  Tu-Sheng Zhang$^{(a)}$}\\
\footnotesize{$^{a)}$   School of Mathematical Sciences, University of Science and Technology of China, Hefei,  China.
}\\
\footnotesize{$^{b)}$  Center for Applied Mathematics and KL-AAGDM, Tianjin University, Tianjin 300072, China}\\
\footnotesize{vivien777@mail.ustc.edu.cn,  wangfy@tju.edu.cn, Tusheng.Zhang@manchester.ac.uk}}
\begin{document}
\allowdisplaybreaks
\def\R{\mathbb R}  \def\ff{\frac} \def\ss{\sqrt} \def\B{\mathbf
B} \def\W{\mathbb W}
\def\N{\mathbb N} \def\kk{\kappa} \def\m{{\bf m}}
\def\ee{\varepsilon}\def\ddd{D^*}
\def\dd{\delta} \def\DD{\Delta} \def\vv{\varepsilon} \def\rr{\rho}
\def\<{\langle} \def\>{\rangle} \def\GG{\Gamma} \def\gg{\gamma}
  \def\nn{\nabla} \def\pp{\partial} \def\E{\mathbb E}
\def\d{\text{\rm{d}}} \def\bb{\beta} \def\aa{\alpha} \def\D{\scr D}
  \def\si{\sigma} \def\ess{\text{\rm{ess}}}
\def\beg{\begin} \def\beq{\begin{equation}}  \def\F{\scr F}
\def\Ric{\text{\rm{Ric}}} \def\Hess{\text{\rm{Hess}}}
\def\e{\text{\rm{e}}} \def\ua{\underline a} \def\OO{\Omega}  \def\oo{\omega}
 \def\tt{\tilde} \def\Ric{\text{\rm{Ric}}}
\def\cut{\text{\rm{cut}}} \def\P{\mathbb P} \def\ifn{I_n(f^{\bigotimes n})}
\def\C{\scr C}      \def\aaa{\mathbf{r}}     \def\r{r}
\def\gap{\text{\rm{gap}}} \def\prr{\pi_{{\bf m},\varrho}}  \def\r{\mathbf r}
\def\Z{\mathbb Z} \def\vrr{\varrho} \def\ll{\lambda}
\def\L{\scr L}\def\Tt{\tt} \def\TT{\tt}\def\II{\mathbb I}
\def\i{{\rm in}}\def\Sect{{\rm Sect}}  \def\H{\mathbb H}
\def\M{\scr M}\def\Q{\mathbb Q} \def\texto{\text{o}} \def\LL{\Lambda}
\def\Rank{{\rm Rank}} \def\B{\scr B} \def\i{{\rm i}} \def\HR{\hat{\R}^d}
\def\to{\rightarrow}\def\l{\ell}\def\iint{\int}
\def\EE{\scr E}\def\Cut{{\rm Cut}}
\def\A{\scr A} \def\Lip{{\rm Lip}}
\def\BB{\scr B}\def\Ent{{\rm Ent}}\def\L{\scr L}
\def\R{\mathbb R}  \def\ff{\frac} \def\ss{\sqrt} \def\B{\mathbf
B}
\def\N{\mathbb N} \def\kk{\kappa} \def\m{{\bf m}}
\def\dd{\delta} \def\DD{\Delta} \def\vv{\varepsilon} \def\rr{\rho}
\def\<{\langle} \def\>{\rangle} \def\GG{\Gamma} \def\gg{\gamma}
  \def\nn{\nabla} \def\pp{\partial} \def\E{\mathbb E}
\def\d{\text{\rm{d}}} \def\bb{\beta} \def\aa{\alpha} \def\D{\scr D}
  \def\si{\sigma} \def\ess{\text{\rm{ess}}}
\def\beg{\begin} \def\beq{\begin{equation}}  \def\F{\scr F}
\def\Ric{\text{\rm{Ric}}} \def\Hess{\text{\rm{Hess}}}
\def\e{\text{\rm{e}}} \def\ua{\underline a} \def\OO{\Omega}  \def\oo{\omega}
 \def\tt{\tilde} \def\Ric{\text{\rm{Ric}}}
\def\cut{\text{\rm{cut}}} \def\P{\mathbb P} \def\ifn{I_n(f^{\bigotimes n})}
\def\C{\scr C}      \def\aaa{\mathbf{r}}     \def\r{r}
\def\gap{\text{\rm{gap}}} \def\prr{\pi_{{\bf m},\varrho}}  \def\r{\mathbf r}
\def\Z{\mathbb Z} \def\vrr{\varrho} \def\ll{\lambda}
\def\L{\scr L}\def\Tt{\tt} \def\TT{\tt}\def\II{\mathbb I}
\def\i{{\rm in}}\def\Sect{{\rm Sect}}  \def\H{\mathbb H}
\def\M{\scr M}\def\Q{\mathbb Q} \def\texto{\text{o}} \def\LL{\Lambda}
\def\Rank{{\rm Rank}} \def\B{\scr B} \def\i{{\rm i}} \def\HR{\hat{\R}^d}
\def\to{\rightarrow}\def\l{\ell}\def\BB{\mathbb B}
\def\8{\infty}\def\I{1}\def\U{\scr U} \def\n{{\mathbf n}}\def\v{V}
\def\V{\mathbb V}
\maketitle

\begin{abstract}
By constructing a suitable  coupling by change of measures, the asymptotic log-Harnack inequality is established for a class of degenerate SPDEs with reflection.
This inequality  implies  the asymptotic heat kernel estimate, the uniqueness of the invariant probability
measure, the asymptotic gradient estimate (hence, asymptotically strong Feller property), and the asymptotic
irreducibility. As application, the main result is illustrated by $d$-dimensional degenerate  stochastic  Navier–Stokes equations with reflection, where the dissipative 
operator is the Dirichlet Laplacian with a power $\theta\ge 1 \lor \ff{d+2}4,$ which includes  the Laplacian when $d\le 2$.
 \end{abstract}

\noindent
 AMS subject Classification:\  60H15; 35Q30; 35R60.   \\
\noindent
 Keywords: Asymptotic log-Harnack inequality, coupling by change of measure, stochastic evolution equation with reflection,  reflected stochastic Navier-Stokes equation.

 \vskip 2cm
    \section{Introduction}
   
   Stochastic partial differential equations (SPDEs) with reflection provide a mathematical framework for modeling the evolution of random interfaces in proximity to a hard boundary; see \cite{FO}. The existence and uniqueness of solutions to such reflected stochastic systems were established in \cite{BZ}. For further studies on real-valued SPDEs with reflection, we refer readers to \cite{NP}, \cite{DP}, \cite{XZ}, and the references therein.

On the other hand, to characterize regularity properties of stochastic systems,  the dimension-free Harnack inequality was initiated in \cite{W} for elliptic diffusion semigroup on the Riemannian manifolds, and as a weaker version of this inequality,  the log-Harnack inequality was proposed  in \cite{W2}. Both inequalities have been extensively studied  through the method of coupling by change of measures, see \cite{AT,W3,W4} and the references within for more details. These inequalities lead to important consequences such as gradient estimates, uniqueness of invariant probability measures, heat kernel estimates, and irreducibility of the associated Markov semigroups.
   
 When the noise of a stochastic system is highly degenerate, the above mentioned Harnack type inequalities are not available, so that the asymptotic log-Harnack inequality was introduced in \cite{X} and \cite{BWY} alternatively, for degenerate-noise-driven 2D Navier-Stokes equations and stochastic systems with infinite delay. This inequality also   implies some regularity properties including the asymptotic heat kernel estimate, the uniqueness of the invariant probability
measure, the asymptotic gradient estimate (hence, asymptotically strong Feller property), and the asymptotic
irreducibility, see Theorem \ref{T2.1} for details. 

 In this paper, we establish the asymptotic Harnack inequality for a class of degenerate SPDEs
 with reflection on the unit ball of a Hilbert space,  for which the well-posdeness and  exponential ergodicity have been studied in  \cite{BLZ, BZ}.
   
   Let $(\H, \|\cdot\|_\H, \<\cdot,\cdot\>)$ be a separable Hilbert space, let $(\L_2(\H),\|\cdot\|_{\L_2(\H)})$ denote the spaces of  Hilbert-Schmidt operators on $\H$. 
   Let $(A,\D(A))$ be a positive definite self-adjoint operator on $\H$ with eigenvalues $\{\ll_i>0: i\ge 1\}$ listed in the increasing order counting multiplicities satisfying
   \beq\label{LLM} \lim_{i\to\infty}\ll_i=\infty.\end{equation}
  Let  $\{e_i:i\ge 1\}$ be the corresponding unitary eigenvectors of $\{\ll_i: i\ge 1\}$, which consist of an orthonormal basis of $\H$.
Then   $\V:=D(A^{\frac{1}{2}})$, the domain of $A^{\ff 1 2}$,  with the inner product
$$\<u,v\>_\V:=\<A^{\frac{1}{2}}u,A^{\frac{1}{2}}v\>, \quad u,v\in \V$$
 is  a Hilbert space compactly embedded into $\H$.  
Let $\V^{\ast}$ be  the dual space of $\V$ with respect to $\H$, so that we have a Gelfand triple
$$
\V\hookrightarrow \H\cong \H^{\ast} \hookrightarrow V^{\ast}.$$
Let $ _{\V^*}\<\cdot,\cdot\>_\V$ be the duality between $\V^*$ and $\V$.

    We consider the following SPDE with reflection
   for $X_t^x\in D:= \{x\in \H: \|x\|_\H\le 1\}$:
\begin{equation}\label{SEE}
\beg{split}
&\d X_t^x= \big\{b(X_t^x)+B(X_t^x,X_t^x)-AX_t^x\big\}\d t + \sigma(X_t)\d W_t+\d L_t^x, \\
&t\ge 0,\  X_0^x =x\in {D},
\end{split}
\end{equation}
\beg{enumerate} \item[$\bullet$]  The measurable maps 
$$b: \H\to \V^*,\ \ B: \V\times \V\to \V^*,\ \ \si:\H\to \L_2(\H)$$  are determined by the corresponding physical models in applications. 
\item[$\bullet$]  $W_t$ is the cylindrical Brownian motion on $\H$, i.e.   formally, 
$$W_t=\sum_{i=1}^\infty B_t^i e_i,\ \ t\ge 0$$
for a sequence of independent one-dimensional Brownian motions $\{B^i\}_{i\ge 1}$ on a complete filtered probability space $(\OO, \{\F_t\}_{t\ge 0}, \F, \P)$.
 \item[$\bullet$]    $L_t$ with $L_0=0$ is an adapted  continuous   process on $\H$ with finite variation, i.e. $\P$-a.s.
 $${\rm Var}_H(L)([0,T]):= \sup_{n\ge 1, 0=t_0<t_1\cdots <t_n= T} \sum_{i=1}^n \|L_{t_i}-L_{t_{i-1}}\|_\H<\infty,\  \ T\in (0,\infty).$$
 \end{enumerate}

 Let  $\si_i(x)= \si(x) e_i\in \H$ for $x\in\H$. We have 
 $$\sum_{i=1}^\infty \|\si_i(x)\|_\H^2\le \|\si(x)\|_{\L_2(\H)}^2 \|x\|_\H^2<\infty,\ \ x\in \H.$$

 The rest of the paper is organized as follows. In Section 2, we recall the well-posedness result for \eqref{SEE}.
 In Section 3, we present the main results with complete
proofs. Finally, in Section 4, we apply our main results to $d$-dimensional reflected  Navier-Stokes equations. 

\section{Well-posedness and moment estimates}
 As a preparation,  in this section we recall the definition of solution and the well-posedness result due to \cite{BZ} under the following assumption.
 
\beg{enumerate} \item[{\bf (A)}] \emph{$(A,\D(A))$ is a positive definite self-adjoint operator on $\H$ with eigenvalues $\{\ll_i\}_{i\ge 1}$ satisfying \eqref{LLM},
and $b,B, \si$ satisfy the following conditions.
\item[(1)] There exist constants $K_b,K_\si \in (0,\infty)$ such that for any $x,y\in\H$, 
$$
\|b(x)-b(y)\|_{\V^{\ast}} \le K_b\|x-y\|_\H,\ \  \|\sigma(x)-\sigma(y)\|_{ \L_2(\H)}^2\leq K_\si \|x-y\|_\H^2.$$
\item[(2)] $B: \V\times \V\to \V^*$ is bilinear, and there exists $K_B\in (0,\infty)$ such that   
$$\|B(x,x)\|_{\V^{\ast}}\leq K_B \|x\|_\H\|x\|_\V, \quad\  \ x\in \V.$$
 Moreover,  there exists $K\in (0,\infty)$ such that  $\bar{B} (x,y,z):=\  _{\V^*}\< B(x,y),z\>_{\V}$  satisfies 
\beg{align*}&\bar{B}(x,y,z)= -\bar {B}(x,z,y), \ \ \ \bar{B}(x,y,y)=0,\\
&|\bar{B}(x,y,z)|\le K\|y\|_\V \ss{\|x\|_\H\|x\|_\V\|z\|_\H\|z\|_\V},\ \ \ x,y,z\in \V.\end{align*}}
\end{enumerate} 

 Now we recall the definition of solution introduced  in \cite{BZ}.
\begin{defn}\label{defsol}
A pair $(X_t^x,L_t^x)_{t\ge 0}$ is said to be a solution of the reflected problem $(\ref{SEE})$ iff the following conditions are satisfied
\beg{enumerate} 
\item[(i)] $X_t^x$ is an $\F_t$-adapted    continuous   process  on $\H$ with  $X\in L^2_{loc}([0,\infty);\V)$    $\mathbb{P}$-a.s. 
 \item[(ii)] $L_0=0, L_t$ is a continuous adapted process on $\H$ such that
\begin{equation}
\mathbb{E}[|\text{Var}_H(L)([0,T])|^2]< \infty,\quad\  T\in (0,\infty).\end{equation}  
Moreover, $\P$-a.s. the following Riemann-Stieltjes integral inequality holds:
\begin{equation} \label{def5}
\int_0^T \<\phi(t)-X_t^x,\d L_t\>\geq 0,\ \ \phi \in C([0,T],D).
\end{equation}
\item[(iii)] $\P$-a.s.   the following integral equation in $\V^{\ast}$ holds:
$$X_t^x = x +  \int_0^t \big\{b(X_s^x)+ B(X_s^x,X_s^x)-A X_s^x\big\}\d s +\int_0^t\sigma(X_s^x)\d W_s +L_t^x,\ \ t\ge 0. $$
\end{enumerate}
\end{defn}
The following result is due to  \cite{BZ} and \cite{BLZ}.

\begin{prp}\label{P2.1} Under assumption {\bf (A)}, for any $x\in \H$ the equation $(\ref{SEE}) $ has a unique solution $(X_t^x,L_t^x)$, and  
\begin{equation} \label{esti}
\mathbb{E}\bigg[\sup\limits_{t\in[0,T]}\|X_t^x\|_\H^2 +  \e^{\ll \int_0^T\|X_s^x\|_\V^2\d s}\bigg]<\infty,\ \ \ \ll,T\in (0,\infty).
\end{equation}
\end{prp}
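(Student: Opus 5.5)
Existence and uniqueness are cited from \cite{BZ}, so the plan concentrates on the moment estimate \eqref{esti}. The key structural fact is that the solution lives in $D$: by Definition \ref{defsol}, $X_t^x\in D$, so $\|X_t^x\|_\H\le 1$ for all $t$ and $\sup_{t\le T}\|X_t^x\|_\H^2\le 1$ trivially. (For $x\notin D$ one would interpret things via the projection onto $D$, but the equation is posed for $x\in D$.) The real content is therefore the exponential moment of $\int_0^T\|X_s^x\|_\V^2\d s$ for \emph{every} $\ll>0$.

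I would apply It\^o's formula to $\|X_t\|_\H^2$ in the Gelfand triple, which is legitimate by the Krylov--Rozovskii type It\^o formula extended to include the finite-variation term $L$:
$$\|X_t\|_\H^2=\|x\|_\H^2+\int_0^t\Big(2\,{}_{\V^*}\<b(X_s)-AX_s,X_s\>_\V+\|\si(X_s)\|_{\L_2(\H)}^2\Big)\d s+2\int_0^t\<X_s,\si(X_s)\d W_s\>+2\int_0^t\<X_s,\d L_s\>,$$
where the term $\bar B(X_s,X_s,X_s)$ vanishes by {\bf (A)}(2). Taking $\phi\equiv0\in C([0,T],D)$ in \eqref{def5} gives $\int_0^t\<X_s,\d L_s\>\le0$, so the reflection term can only help. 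Moreover $\<AX,X\>=\|X\|_\V^2$, and $2|{}_{\V^*}\<b(X),X\>_\V|\le 2(\|b(0)\|_{\V^*}+K_b)\|X\|_\V\le \frac12\|X\|_\V^2+C$ on $D$. Finally $\|\si(X)\|_{\L_2}^2\le 2\|\si(0)\|^2_{\L_2}+2K_\si$ is bounded on $D$. Combining these and using $\|X_t\|_\H\le1$ yields
$$\frac32\int_0^t\|X_s\|_\V^2\d s\le 1+Ct+M_t,\qquad M_t:=2\int_0^t\<X_s,\si(X_s)\d W_s\>,$$
where $\d\<M\>_t=4\|\si(X_t)^*X_t\|_\H^2\d t\le C'\d t$, since everything is bounded on $D$.

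It then follows that $\ll\int_0^T\|X_s\|_\V^2\d s\le c\ll(1+T)+c\ll M_T$. Because $M$ has deterministically bounded quadratic variation, Novikov's criterion gives $\E\, \e^{\gamma M_T-\frac{\gamma^2}{2}\<M\>_T}=1$, and hence $\E\,\e^{\gamma M_T}\le \e^{\gamma^2C'T/2}<\infty$ for every $\gamma$. This proves \eqref{esti} for all $\ll,T$.

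The main obstacle is justifying the It\^o formula with the reflection term $L$, in particular checking that $\int\<X,\d L\>$ is a well-defined Riemann--Stieltjes integral and that the Gelfand-triple It\^o formula applies when $L$ is only an $\H$-valued process of bounded variation. I would handle this by Galerkin or Yosida-type approximation ($X^n=\lambda_n(\lambda_n+A)^{-1}X$), applying the finite-dimensional It\^o formula and passing to the limit using $X\in L^2_{loc}(\V)$ and the continuity of $X$ in $\H$. Alternatively, I would take this formula directly from \cite{BZ}, where it is established as part of the well-posedness proof.
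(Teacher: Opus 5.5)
Your proposal is correct and follows the paper's route: the paper takes well-posedness from \cite{BZ} and \cite{BLZ}, and in the proof of Lemma \ref{L2}(1) it derives the exponential moment the same way you do. That is, It\^o's formula for $\|X_t^x\|_\H^2$, dropping $\bar B(X,X,X)$ and $\int\<X,\d L\>\le 0$ (from \eqref{def5} with $\phi=0$), bounding the $b,\si$ terms on $D$, and controlling $\int_0^t 2\<X_s,\si(X_s)\d W_s\>$ by an exponential martingale with bounded quadratic variation. Your bound $|{}_{\V^*}\<b(X)-b(0),X\>_\V|\le K_b\|X\|_\H\|X\|_\V$ is actually the correct consequence of {\bf (A)}(1), whereas the paper's \eqref{TTB} writes $K_b\|X\|_\H^2$; this only changes constants.
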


\section{Asymptotic log-Harnack and applications}
 
In this part, we first recall the asymptotic Hanrack inequality and applications, then establish this inequality for the equation \eqref{SEE}.

\subsection{General results}

For a   metric space  $(E, \rr)$, let $P_t$ be a Markov semigroup on $\B_b(E)$, the class of bounded measurable functions on $E$. Let $\B_b^+(E)$ be the set of  strictly positive functions in $\B_b(E)$. 
 
 For any function $f$ on $E$, let $$|\nn f|(x)=\limsup_{y\to x} \ff{|f(x)-f(y)|}{\rr(x,y)},\ \ x\in E.$$   Let  
 $\|\nn f\|_\infty:= \sup_{x\in E} |\nn f|(x),$ and 
 \beq\label{LB} \beg{split}& {\rm Lip}(E):= \{f\in \B_b(E):\ \|\nn f\|_\infty <\infty\},\\
 &\D(E):=\{f\in \B_b^+(E):\ \|\nn\log f\|_\infty<\infty\}.\end{split}\end{equation}  
  
\beg{defn} We call    $P_t$ satisfies the following asymptotic log-Harnack inequality, if there exist symmetric  $\Phi, \Psi_t: E\times E\to (0,\infty)$ with 
$\Psi_t\downarrow 0$ as $t\uparrow \infty$, such that 
\beq\label{ALH}  P_t \log f(x)\le \log P_t f(y) +   \Phi(x,y) + \Psi_t(x,y)\|\nn\log f\|_\infty,\ \ t>0, f\in \D(E).\end{equation} \end{defn}

As shown in \cite{X} that the asymptotic Harnack inequality implies the asymptotically strong Feller property introduced in \cite{HM}. 
A continuous function $\tt\rr : E\times E\to\R_+:=[0,\infty)$ is called a pseudo-metric, provided
$$\tt\rr(x,y)=0\ \text{iff}\ x=y,\ \ \ \tt\rr(x,y)\le
\tt\rr(x,z)+\tt\rr(z,y)\ \text{for}\ x,y,z\in E.$$ For a  pseudo-metric $\tt\rr$ we consider the transportation cost (also called $L^1$-Warsserstein distance when $\tt\rr$ is a distance)
$$\W_1^{\tt\rr} (\mu_1,\mu_2):= \inf_{\pi\in \scr C(\mu_1,\mu_2)} \int_{E\times E} \tt\rr(x,y)\pi(\d x,\d y),\ \ \mu_1,\mu_2\in \scr P(E),$$
where $\scr P(E)$ is the class of probability measures on $E$, and $\scr C(\mu_1,\mu_2)$ is all couplings of $\mu_1$ and $\mu_2$; that is $\pi\in \scr C(\mu_1,\mu_2)$ means that $\pi\in \scr P(E\times E)$ with
$\pi(\cdot\times E)=\mu_1$ and $\pi(E\times\cdot)=\mu_2.$  

An increasing
sequence of pseudo-metrics $\{\rr_n\}_{n=0}^\infty$ (i.e.,
$\rr_i(\cdot,\cdot)\ge \rr_j(\cdot,\cdot), i\ge j$) is called   totally separating   if $\lim_{n\to\infty}\rr_n(x,y)=1$ for all $x\neq
y$.

\begin{defn}[\cite{HM}] The Markov semigroup $P_t$ is called asymptotically strong Feller at a point 
$x\in E,$ if there exist  a totally separating system of
pseudo-metrics $\{\rr_k\}_{k\ge 1}$  and a sequence $t_k\uparrow\infty$
such that
\begin{equation}\label{eq5}
\inf_{U\in\mathcal {U}_x}\limsup_{k\rightarrow\8}\sup_{y\in
U} W_1^{\rr_k}( P_{t_k}(x,\cdot), {P}_{t_k}(y,\cdot))=0,
\end{equation}
where $\mathcal {U}_x$ denotes the collection of all open sets
containing $x$, and $P_t(x, A):= P_t1_A(x)$ for $x\in E$ and measurable $A\subset E$. 
$P_t$  is called asymptotically strong Feller if it is asymptotically strong Feller at any $x\in E$. 
\end{defn}

 The following result is taken from \cite{BWY}. 

\beg{thm}\label{T2.1} Let $P_t$ satisfy $\eqref{ALH}$ for some symmetric $\Phi, \Psi_t: E\times E\to (0,\infty)$  with $\Psi_t\downarrow 0$ as $t\uparrow \infty$.   Then:
\beg{enumerate} \item[$(1)$] If  for any $x\in E$,
\begin{equation}\label{c7}\beg{split}
&\Lambda(x):=\limsup_{y\to
x}\ff{\Phi(x,y)}{\rr(x,y)^2}<\infty,  \\
&\Gamma_t(x)  :=\limsup_{y\to
x}\ff{\Psi_t(x,y)}{\rr(x,y)}<\infty, \end{split}
\end{equation} then  
\beq\label{GES}|\nn  {P}_tf | \le\ss{2\LL}\ss{ 
{P}_tf^2 -(  {P}_tf)^2}+\|\nn
f\|_\8\Gamma_t, \ \ t>0, f\in \Lip(E).\end{equation}
In particular,  when $\Gamma_t\downarrow 0$ as $t\uparrow \infty$, $P_t$ is asymptotic strong Feller. 
 \item[$(2)$]  If $P_t$ has an invariant probability measure $\mu$, then  \begin{equation}\label{03}
\limsup_{t\to\8}P_tf(x)\le
\log\bigg(\ff{\mu(\e^f)}{\int_E\e^{-\Phi(x,y)}\mu(\d
y)}\bigg),~~~~x\in E,\ f\in {\rm Lip}(E).
\end{equation} Consequently, for any closed $A\subset E$ with $\mu(A)=0,$
\begin{equation}\label{05}
\limsup_{t\to\8}P_t1_A(x)=0,~~~~x\in E.
\end{equation}
\item[$(3)$] $P_t$ has at most one   invariant probability measure.
 \end{enumerate}\end{thm} 

\subsection{Main result of the paper}

Let $P_t$ the the Markov semigroup associated with \eqref{SEE}, i.e. 
$$P_t f(x):= \E[f(X_t^x)],\ \ \ t\ge 0,\ x\in\H,\ f\in \B_b(D).$$
To establish \eqref{ALH} for this $P_t$,   we make the following assumption for some $N\in \mathbb N$.
 
\beg{enumerate} \item[${\bf (A_N)}$] \emph{ For any $x\in \H$,
 $$\H_N:= {\rm span}\{e_i: 1\le i\le N\} \subset \si(x) \H:= \{\si(x) z:\ z\in\H\},$$ and there exists a   measurable map
 $$\si^{-1}: \H\times \H_N\to \H$$
 such that  $\si(x)\si^{-1}(x) y=y$ for   any $(x,y)\in\H\times\H_N$, and  
 $$\|\si^{-1}\|_{\H_N} := \sup_{x\in \H, y\in \H_N, \|y\|\le 1} \|\si^{-1}(x)y\| <\infty.  $$ }
 \end{enumerate}
 
 The main result of the paper is the following, which applies to   degenerate noise such that    ${\bf (A_N)}$ holds  for some $N\in \mathbb N$ with  $r(N)>0$. Obviously, we have $r(N)>0$  for large enough $N$   due to  \eqref{LLM}.
 
 \begin{thm}\label{harnack}
Assume {\bf (A)}. If there exists $N\in\mathbb N$ such that  ${\bf (A_N)}$ and $r(N)>0$  hold, where 
$$ r(N):= \lambda_{N+1} -2K_b-3K_\si-2K_B^2(2K_b+\|b(0)\|_{\V^*}^2)- 4K_B^2(4K_B^2+1)(K_\si+\|\si(0)\|_{\L_2(\H)}^2),$$
       then   the asymptotic log-Harnack inequality $\eqref{ALH}$ holds with
\beq\label{PPS}\beg{split}& \Phi(x,y)=  \ff {\e^{K_B^2} \ll_{N+1}^2\|\si^{-1}\|_{\H_N}^2 }{2r(N)} \|x-y\|_\H^2,\\
& \ \ \Psi_t(x,y)=   \e^{\ff 1 2(K_B^2 -r(N)t) }\|x-y\|_\H,\ \ \ t\ge 0,\ x,y\in \H.\end{split}\end{equation}
 Consequently,      $P_t$ has at most one invariant probability measure, and the estimates $\eqref{GES}$, $\eqref{03}$ and $\eqref{05}$ hold for 
 $$\LL(x)= \ff {\e^{K_B^2} \ll_{N+1}^2\|\si^{-1}\|_{\H_N}^2 }{2r(N)} \|x-y\|_\H^2,\ \ \ \GG_t(x)=  \e^{\ff 1 2(K_B^2 -r(N)t) },\ \ \ t\ge 0,\ x\in\H.$$
\end{thm}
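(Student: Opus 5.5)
The plan is to construct an asymptotic coupling by change of measure, in the style of \cite{X,BWY}. Let $\pi_N$ be the orthogonal projection onto $\H_N$ and fix $x,y\in D$. Let $X_t=X_t^x$ be the solution from Proposition \ref{P2.1}. Define $Y_t$ as the solution of the reflected equation
$$\d Y_t=\big\{b(Y_t)+B(Y_t,Y_t)-AY_t\big\}\d t+\si(Y_t)\d W_t+\ll_{N+1}\pi_N(X_t-Y_t)\d t+\d \tt L_t,\quad Y_0=y,$$
with its own reflection term $\tt L$ keeping $Y_t\in D$. Well-posedness follows as in \cite{BZ}, since the extra drift is Lipschitz in $\H$. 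Set $\xi_t:=\si^{-1}(Y_t)\ll_{N+1}\pi_N(X_t-Y_t)$, which makes sense by ${\bf (A_N)}$, and put
$$\tt W_t:=W_t+\int_0^t\xi_s\d s,\qquad R_t:=\exp\Big[-\int_0^t\<\xi_s,\d W_s\>-\ff12\int_0^t\|\xi_s\|_\H^2\d s\Big].$$
If $\E R_t=1$, Girsanov's theorem gives that $\tt W$ is a cylindrical Brownian motion under $\Q_t:=R_t\P$. Under $\Q_t$, $Y$ then solves \eqref{SEE} from $y$, because $\tt L$ still satisfies \eqref{def5} pathwise. By uniqueness, $P_tf(y)=\E[R_tf(Y_t)]$.

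The core is an energy estimate for $Z_t:=X_t-Y_t$. Apply Itô's formula to $\|Z_t\|_\H^2$. The reflection terms give $\<Z_t,\d L_t\>\le 0$ and $-\<Z_t,\d\tt L_t\>\le0$ by \eqref{def5}: take $\phi=Y$ in the condition for $L$, and $\phi=X$ in the condition for $\tt L$. The linear part combines with the correction: $-\|Z\|_\V^2-\ll_{N+1}\|\pi_NZ\|^2\le-\ll_{N+1}\|Z\|_\H^2$ after splitting into low and high modes. For the drift, $\|b(X)-b(Y)\|_{\V^*}\le K_b\|Z\|_\H$, which is absorbed via Young's inequality into a $\V$-norm piece. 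For the bilinear term, antisymmetry gives $\bar B(X,X,Z)-\bar B(Y,Y,Z)=\bar B(Z,Y,Z)$. This is bounded by $K\|Y\|_\V\|Z\|_\H\|Z\|_\V\le \ff12\|Z\|_\V^2+c\|Y\|_\V^2\|Z\|_\H^2$. The noise contributes $K_\si\|Z\|_\H^2\d t$ plus a martingale. This yields roughly
$$\d\|Z_t\|_\H^2\le\big(-(\ll_{N+1}-2K_b-3K_\si)+cK_B^2\|Y_t\|_\V^2\big)\|Z_t\|_\H^2\d t+\d M_t.$$
To control $\int_0^t\|Y_s\|_\V^2\d s$ exponentially, I would use the energy identity for $\|Y_t\|_\H^2$. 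With $\bar B(Y,Y,Y)=0$, $\<Y,\d\tt L\>\le0$ (from $\phi=0$), and $\|Y\|_\H\le1$, it gives $\int_0^t\|Y_s\|_\V^2\d s\le C_0+(2K_b+\|b(0)\|^2_{\V^*}+\ldots)t+$ a martingale with quadratic variation $\lesssim(K_\si+\|\si(0)\|^2_{\L_2})\int\|Y\|^2\d s$. An exponential martingale bound then produces exactly the constants appearing in $r(N)$. It also gives $\E[\e^{-r(N)t}\ldots]$ type estimates, hence $\E_{\Q}\|Z_t\|_\H^2\le \e^{K_B^2-r(N)t}\|x-y\|_\H^2$ and $\int_0^\infty\E\|Z_s\|^2\d s\le \e^{K_B^2}\|x-y\|^2/r(N)$. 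The weight $\e^{K_B^2}$ comes from the initial-time constant. The same bounds, localized by stopping times, verify Novikov's condition, so $\E R_t=1$.

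Finally, for $f\in\D(E)$, Young's inequality gives
$$P_t\log f(y)=\E[R_t\log f(Y_t)]\le\log\E f(X_t)+\E[R_t\log R_t]+\E[R_t(\log f(Y_t)-\log f(X_t))].$$
The entropy term equals $\ff12\E_{\Q_t}\int_0^t\|\xi_s\|^2\d s\le\ff12\ll_{N+1}^2\|\si^{-1}\|^2_{\H_N}\int_0^t\E_{\Q}\|Z_s\|^2\d s\le\Phi(x,y)$. The last term is at most $\|\nn\log f\|_\infty\E_{\Q}\|Z_t\|\le\Psi_t(x,y)\|\nn\log f\|_\infty$ by Jensen. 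Swapping the roles of $x$ and $y$ (both $\Phi$ and $\Psi_t$ are symmetric) gives \eqref{ALH}. The consequences then follow from Theorem \ref{T2.1}, with $\LL,\GG_t$ read off from \eqref{PPS}.

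The main obstacle is the estimate under $\Q_t$ rather than $\P$. Under $\Q_t$ the pair $(X,Y)$ has $X$ driven by $\tt W-\xi$, so the exponential moment of $\int\|Y\|_\V^2$ has to be taken for the process that is the genuine solution. I would first try to do the Itô computation of $\e^{-\int_0^t(\ldots)}\|Z_t\|^2$ entirely under $\Q_t$, with $Y$ the true solution there, and control $X$ only through $\|X\|_\H\le1$. The constant bookkeeping that reproduces exactly $r(N)$ is the delicate part.
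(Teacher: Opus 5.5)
Your proposal has a genuine gap. It sits exactly at the step you call the main obstacle, and it cannot be fixed by bookkeeping.

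\textbf{The gap.} With your direction of Young's inequality, both the entropy $\frac12\E_{\Q_t}\int_0^t\|\xi_s\|_\H^2\d s$ and $\E_{\Q_t}\|Z_t\|_\H$ are $\Q_t$-expectations. However, your two ingredients are valid under different measures.
\begin{itemize}
\item The dissipation step $-\|Z\|_\V^2-\lambda_{N+1}\|\pi_NZ\|_\H^2\le-\lambda_{N+1}\|Z\|_\H^2$ uses that the control drift in $\d Z_t$ is exactly $-\lambda_{N+1}\pi_NZ_t\,\d t$. This holds only under $\P$.
\item The exponential bound on $\int_0^t\|Y_s\|_\V^2\d s$ comes from an energy identity with no control term. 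This holds only under $\Q_t$.
\end{itemize}
Under $\Q_t$ we have $\d W_t=\d\tilde W_t-\xi_t\d t$, so $X$ acquires the drift
\[
-\sigma(X_t)\xi_t=-\lambda_{N+1}\pi_NZ_t-\lambda_{N+1}\big(\sigma(X_t)-\sigma(Y_t)\big)\sigma^{-1}(Y_t)\pi_NZ_t .
\]
The second term contributes up to $2\lambda_{N+1}\sqrt{K_\sigma}\|\sigma^{-1}\|_{\H_N}\|Z_t\|_\H^2\|\pi_NZ_t\|_\H$ to $\d\|Z_t\|_\H^2$. This is of the same order $\lambda_{N+1}$ as the dissipation, so $r(N)>0$ does not control it. Knowing only $\|X_t\|_\H\le 1$ does not help.

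Conversely, under $\P$ the energy identity for $Y$ contains $2\lambda_{N+1}\langle Y_t,\pi_N(X_t-Y_t)\rangle$. Your bound on $\E\,\e^{cK_B^2\int_0^t\|Y_s\|_\V^2\d s}$ then grows at a rate of order $K_B^2\lambda_{N+1}$, which again competes with the dissipation. For additive noise your scheme would close, but the theorem allows multiplicative $\sigma$.

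\textbf{The missing idea.} The paper runs the entropy inequality in the opposite direction, so that everything is computed under $\P$. There $X$ is the genuine solution and the control is exact. First,
\[
P_t\log f(x)=\E\log f(X_t)\le\E\log f(Y_t)+\|\nabla\log f\|_\infty\E\|X_t-Y_t\|_\H ,
\]
and then
\[
\E\log f(Y_t)=\E_{\Q_t}\big[R_t^{-1}\log f(Y_t)\big]\le\log\E_{\Q_t}f(Y_t)+\E_{\Q_t}\big[R_t^{-1}\log R_t^{-1}\big]=\log P_tf(y)+\frac12\E\int_0^t\|\xi_s\|_\H^2\d s .
\]
Correspondingly, write $\langle B(X,X)-B(Y,Y),Z\rangle=\bar B(Z,X,Z)$ and bound it with $\|X\|_\V$ instead of $\|Y\|_\V$. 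Then \eqref{T1} for the genuine solution $X$ supplies the exponential moments.

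Alternatively, you could keep your direction and change the coupling. Put $\sigma(Y_t)\xi_t\,\d t$ with $\xi_t=\lambda_{N+1}\sigma^{-1}(X_t)\pi_NZ_t$ into the equation for $Y$. Then under $\Q_t$ the drift of $X$ is exactly $-\lambda_{N+1}\pi_NZ_t$, and your $\|Y\|_\V$-argument applies.

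\textbf{Two smaller points.}
\begin{itemize}
\item To get the quadratic $\Phi$ you need a weighted estimate for $\|Z_t\|_\H^4$, not $\|Z_t\|_\H^2$. Cauchy--Schwarz applied to $\E[g_t\|Z_t\|_\H^2]$ only yields $\E\|Z_t\|_\H^2\lesssim\|x-y\|_\H$. The paper applies It\^o's formula to $g_t\|Z_t\|_\H^4$ with $g_t=\e^{-2K_B^2\int_0^t\|X_s\|_\V^2\d s}$, as in Lemma \ref{L2}.
\item Novikov's condition needs no localization, since $X_t,Y_t\in D$ gives $\|\xi_t\|_\H\le2\lambda_{N+1}\|\sigma^{-1}\|_{\H_N}$.
\end{itemize}
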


\subsection{Proof of Theorem \ref{harnack}}
By Theorem \ref{T2.1} for $E=\H$ and $\rr(x,y)=\|x-y\|_\H$, it suffices to prove \eqref{ALH} for $\Phi$ and $\Psi_t$ given in \eqref{PPS}.
To this end, we will apply the coupling by change of measures.  

\beg{proof}[Proof of Theorem \ref{harnack}] Let
$\pi_N: \H\to \H_N$ be the orthogonal projection, i.e.
$$\pi_N x:= \sum_{i=1}^N \<x,e_i\>e_i,\ \ \ x\in \H.$$
For any $x,y\in {D}$, let $(X^x_t,L^x_t)$ be the solution of equation \eqref{SEE}, and let
  $(Y_t^y,L_t^y)$ be  the solution to the following modified equation:
\begin{equation}\label{4.3}
\beg{split} 
\d Y_t^y =&\,\big\{b(Y_t^y)+B(Y_t^y,Y_t^y) - AY_t^y\big\}\d t +  \sigma(Y_t^x)\d W_t + \d L_t^y\\
& \qquad  +\ff{\lambda_{N+1}}2\pi_N(X_t^x-Y_t^y)\d t,\  \ \ t\geq0,\ Y_0^y=y.
\end{split}\end{equation}

To formulate $P_t f(y)$ using $Y_t^y$, we make use of Girsanov's theorem. Let
\begin{align*}
&\tt W_t:= W (t) + \int_0^t \beta_s \d s,\\
&\beta_s  := \lambda_{N+1}\sigma(Y^y_s)^{-1}\pi_N(X^x_s-Y^y_s).
\end{align*}
By ${\bf (A_N)}$ and $X_t^x,Y_t^y\in D$,  for any $t\geq 0$,
\begin{equation}\label{BT}\beg{split}
&\|\beta_t\|_{\H}\leq \ff 1 2\ll_{N+1}\|\si^{-1}\|_{\H_N} \|\pi_N(X^x_t-Y^y_t)\|\\
&\leq \ff 1 2 \ll_{N+1}\|\si^{-1}\|_{\H_N}\|X^x_t-Y^y_t\|\le  \ll_{N+1}\|\si^{-1}\|_{\H_N}<\infty.\end{split}
\end{equation}
So, by Gisranov's theorem, 
$$R_t:=\e^{-\int_0^t \beta_s \d W_s -\frac{1}{2}\int_0^t\|\bb_s\|_\H^2 \d s},\ \ t\ge 0$$
is a martingale, and for any $t>0$,
$(\tt W_s)_{s\in [0,t]}$ is cylindrical Brownian motion on $\H$ under the weighted probability
$$\d\Q_t:= R_t\d\P.$$ 
So, by the weak uniqueness of \eqref{SEE} for initial value $y$ in place of $x$, and noting that \eqref{4.3} can be reformulated as 
$$
\d Y_t^y =\big\{b(Y_t^y)+B(Y_t^y,Y_t^y) - AY_t^y\big\}\d t +  \sigma(Y_t^x)\d \tt W_t + \d L_t^y,
   \ \ t\geq0,\ Y_0^y=y,$$
  we have  
 $$P_t f(y)= \E_{\Q_t} [f(Y_t^y)],\ \ t>0,\ f\in \B_b(D),$$
 where  $\E_{\Q_t}$ is the expectation with respect to the weighted probability $\Q_t$.
Combining this with Young's inequality \cite[Lemma 2.4]{ATW},  we obtain that for any $f\in \D(D)$, which is defined in \eqref{LB} for $E=D$,
\beg{align*} &\E[\log f(Y_t^y)] = \E_{\Q_t}[R_t^{-1} \log f(Y_t^y)] 
\le \log \E_{\Q_t} [f(Y_t^y)] + \E_{\Q_t}[R_t^{-1}\log R_t^{-1}]\\
&= \log P_t f(y)- \E [\log R_t]=  \log P_t f(y) + \ff 1 2\E \int_0^t \|\bb_s\|_\H^2 \d s.\end{align*}
Therefore,
\beq\label{AF} \beg{split} &P_t \log f(x) = \E[\log f(X_t^x)]=\E[\log f(Y_t^y)]+ \E[\log f(X_t^x)- \log f(Y_t^y)]\\
&\le \log P_tf(y) +  \ff 1 2\E \int_0^t \|\bb_s\|_\H^2 \d s+ \|\nn\log f\|_\infty \E[\|X_t^x-Y_t^y\|],\ \ t>0,\  f\in \D(D).\end{split}\end{equation} 
By Schwarz inequality and Lemma \ref{L2} below,  we obtain
\beq\label{09}\beg{split}&\E\big[\|X_t^x-Y_t^y\|_{\H}^2\big]\\
&\le \Big(\E \left[\e^{-2 K_B^2 \int_0^t\|X^x_s\|^2_\V\d s} \|X^x_t-Y^y_t\|_\H^4\right]\Big)^{\ff 1 2}
 \Big(\E \left[\e^{2K_B^2\int_0^t\|X^x_s\|^2_\V\d s}\right]\Big)^{\ff 1 2}\\
 &\le \e^{K_B^2 -r(N)t} \|x-y\|_\H^2,\ \ \ t\ge 0,\ x,y\in \H.\end{split}\end{equation}
 This together with \eqref{BT} implies 
 $$\ff 1 2\E\int_0^t \|\bb_s\|_\H^2\d s\le \ff {\e^{K_B^2} \ll_{N+1}^2\|\si^{-1}\|_{\H_N}^2 }{2r(N)} \|x-y\|_\H^2.
 $$
 Moreover, by \eqref{09} and  Jensen's inequality,
 $$\E\big[\|X_t^x-Y_t^y\|_{\H}\big]\le \e^{\ff 1 2 (K_B^2-r(N)t)}\|x-y\|_\H.$$  
Therefore,    \eqref{AF}  implies \eqref{ALH} with the  $\Phi$ and $\Psi$ given in \eqref{PPS}.\end{proof}

\begin{lem}\label{L2} Under {\bf (A)} and ${\bf (A_N)}$, for any $t>0$ and $x,y\in \H$, we have 
\begin{equation}\label{T1}
\mathbb{E}\left[ \e^{\ll\int_0^t\|X^x_s\|_\V^2\d s} \right]\leq \e^{\ll +\ll\big[(2K_b+\|b(0)\|_{\V^*}^2)+ (4\ll+2)(K_\si+\|\si(0)\|_{\L_2(\H)}^2)\big]t}, \ \ \ \ll>0,
\end{equation}
 \begin{equation}\label{T2}
\mathbb{E} \left[\e^{-2K_B^2\int_0^t\|X^x_s\|^2_\V\d s} \|X^x_t-Y^y_t\|_\H^4\right]\leq  \e^{(4K_b+6K_\si- 2\lambda_{N+1})t} \|x-y\|_\H^4.
\end{equation}
 \end{lem}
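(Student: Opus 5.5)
Both estimates follow from It\^o's formula combined with exponential weights, together with two features of the reflection. First, since $X_t^x\in D$, we have $\|X_t^x\|_\H\le 1$. Second, condition \eqref{def5} gives one-signed contributions from the local-time terms. For \eqref{T1} I apply It\^o's formula to $\|X_t^x\|_\H^2$:
$$\d\|X_t^x\|_\H^2=\big\{2\,{}_{\V^*}\<b(X_t^x),X_t^x\>_\V-2\|X_t^x\|_\V^2+\|\si(X_t^x)\|_{\L_2(\H)}^2\big\}\d t+2\<X_t^x,\si(X_t^x)\d W_t\>+2\<X_t^x,\d L_t^x\>.$$
The term $\bar B(X,X,X)$ vanishes. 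Taking $\phi\equiv 0$ in \eqref{def5} gives $\int\<X,\d L^x\>\le 0$. Next I bound $2\<b(X),X\>\le 2(K_b\|X\|_\H+\|b(0)\|_{\V^*})\|X\|_\V\le \|X\|_\V^2+(2K_b+\|b(0)\|_{\V^*}^2)$, using $\|X\|_\H\le 1$ and Young's inequality; this costs at most a harmless adjustment of the constants. I also use $\|\si(X)\|_{\L_2}^2\le 2(K_\si+\|\si(0)\|^2)$. This yields $\int_0^t\|X_s\|_\V^2\d s\le 1+C_1t+M_t$, where $M_t=2\int_0^t\<X,\si(X)\d W\>$ has $\d\<M\>_t\le 4\cdot 2(K_\si+\|\si(0)\|^2)\d t$. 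Multiplying by $\ll$ and exponentiating, I use $\E\e^{\ll M_t}\le (\E\e^{2\ll M_t-2\ll^2\<M\>_t})^{1/2}(\E\e^{2\ll^2\<M\>_t})^{1/2}$, whose first factor is at most $1$ because it is a supermartingale. The resulting bound has the form $\e^{\ll+\ll[C_1+(4\ll+2)C_2]t}$, which is \eqref{T1} with the stated constants.

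For \eqref{T2} I set $Z_t=X_t^x-Y_t^y$ and note that I read $\si(Y_t^x)$ in \eqref{4.3} as $\si(Y_t^y)$. Then
$$\d Z_t=\{b(X)-b(Y)+B(X,X)-B(Y,Y)-AZ-\tfrac{\ll_{N+1}}2\pi_NZ\}\d t+(\si(X)-\si(Y))\d W_t+\d L^x_t-\d L^y_t.$$
Applying \eqref{def5} to $X$ with $\phi=Y$, and to $Y$ with $\phi=X$, gives $\<Z,\d L^x-\d L^y\>\le 0$. Bilinearity and antisymmetry give $\bar B(X,X,Z)-\bar B(Y,Y,Z)=\bar B(Z,X,Z)$, since $\bar B(Y,Z,Z)=0$. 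Hence the bound from {\bf (A)}(2) combined with Young's inequality gives $|\bar B(Z,X,Z)|\le K\|X\|_\V\|Z\|_\H\|Z\|_\V\le \tfrac12\|Z\|_\V^2+\tfrac{K^2}{2}\|X\|_\V^2\|Z\|_\H^2$.

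The main obstacle is the dissipation. I need $-\|Z\|_\V^2-\tfrac{\ll_{N+1}}2\|\pi_NZ\|^2$ to beat $\ll_{N+1}\|Z\|_\H^2$ up to constants. I split $Z$ into $\pi_NZ$ and $(I-\pi_N)Z$: on the high modes $\|(I-\pi_N)Z\|_\V^2\ge\ll_{N+1}\|(I-\pi_N)Z\|_\H^2$, and on the low modes the added drift supplies the $\ll_{N+1}$ factor. The absorbed half of $\|Z\|_\V^2$ and the $b$-term $2K_b\|Z\|_\H\|Z\|_\V$ have to be balanced against this. After that, together with $\|\si(X)-\si(Y)\|_{\L_2}^2\le K_\si\|Z\|^2$, I expect
$$\d\|Z\|_\H^2\le\{-(\ll_{N+1}-2K_b-K_\si)+K_B^2\|X\|_\V^2\}\|Z\|_\H^2\d t+\d\tilde M_t.$$
This holds provided the constant $K$ can be matched to $K_B$, which is the reading the weight $2K_B^2$ in \eqref{T2} suggests; tracking this carefully is delicate. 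Applying It\^o's formula to $\|Z\|^4=(\|Z\|^2)^2$ adds the quadratic-variation term $4\|(\si(X)-\si(Y))^*Z\|^2\le 4K_\si\|Z\|^4$. This gives
$$\d\|Z\|^4\le\{-(2\ll_{N+1}-4K_b-6K_\si)+2K_B^2\|X\|_\V^2\}\|Z\|^4\d t+\d\hat M_t.$$
Finally I apply It\^o's product rule to $\e^{-2K_B^2\int_0^t\|X\|_\V^2\d s+(2\ll_{N+1}-4K_b-6K_\si)t}\|Z_t\|^4$. The result is a local supermartingale, and it is nonnegative, so it is a true supermartingale. Taking expectations, with the estimate \eqref{esti} of Proposition \ref{P2.1} available for integrability, gives \eqref{T2}.
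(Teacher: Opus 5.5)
Your outline follows the paper's proof. For \eqref{T1}: It\^o's formula for $\|X^x_t\|_{\mathbb H}^2$ plus an exponential-martingale bound. For \eqref{T2}: It\^o's formula for the weighted $\|Z_t\|_{\mathbb H}^4$, the sign of the reflection terms from \eqref{def5}, the identity $\bar B(X,X,Z)-\bar B(Y,Y,Z)=\bar B(Z,X,Z)$ with Young's inequality to cancel against the weight, and the split $\|Z\|_{\mathbb V}^2\ge\lambda_{N+1}\|(I-\pi_N)Z\|_{\mathbb H}^2$.

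The step you defer, ``balancing'' the $b$-term, cannot be carried out, so the proposal does not reach the stated constants. Work at the level of $\mathrm d\|Z\|_{\mathbb H}^2$, with the weight fixed at $2K_B^2$ and $K=K_B$. Cancelling $2|\bar B(Z,X,Z)|\le 2K_B\|X\|_{\mathbb V}\|Z\|_{\mathbb H}\|Z\|_{\mathbb V}$ uniformly in $X$ costs a full $\|Z\|_{\mathbb V}^2$ out of the available $2\|Z\|_{\mathbb V}^2$. The remaining $\|Z\|_{\mathbb V}^2+\lambda_{N+1}\|\pi_NZ\|_{\mathbb H}^2$ is exactly what produces $\lambda_{N+1}\|Z\|_{\mathbb H}^2$, so nothing is left to absorb $2K_b\|Z\|_{\mathbb H}\|Z\|_{\mathbb V}$.

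Concretely, the inequality you need is $2K_b\|Z\|_{\mathbb H}\|Z\|_{\mathbb V}-\|Z\|_{\mathbb V}^2-\lambda_{N+1}\|\pi_NZ\|_{\mathbb H}^2\le-(\lambda_{N+1}-2K_b)\|Z\|_{\mathbb H}^2$. It fails at $Z=e_{N+1}$ as soon as $\lambda_{N+1}>1$, and the best trade-off only gives a rate like $\lambda_{N+1}-2K_b\sqrt{\lambda_{N+1}}$. This is not an artefact of your estimates: $b(x)=K_bA^{1/2}x$ is $K_b$-Lipschitz from $\mathbb H$ to $\mathbb V^*$, yet ${}_{\mathbb V^*}\langle b(e_{N+1}),e_{N+1}\rangle_{\mathbb V}=K_b\sqrt{\lambda_{N+1}}$.

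The paper gets $4K_b$ in \eqref{T2}, and $2K_b$ in \eqref{T1} via \eqref{TTB}, by bounding the pairing directly: ${}_{\mathbb V^*}\langle b(x)-b(y),x-y\rangle_{\mathbb V}\le K_b\|x-y\|_{\mathbb H}^2$. This one-sided condition is stronger than {\bf (A)}(1) as literally stated. The paper likewise just writes $K_B$ for the trilinear constant $K$ when estimating $I_3$. Granting these two inputs, your computation closes exactly as the paper's does: $2K_\sigma+4K_\sigma$ from the noise, cancellation of $2K_B^2\|X\|_{\mathbb V}^2\|Z\|_{\mathbb H}^4$ against the weight, and $-2\lambda_{N+1}\|Z\|_{\mathbb H}^4$ from the mode split.

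There are two further slips in \eqref{T1}.

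\begin{enumerate}
\item The inequality $2(K_b+\|b(0)\|_{\mathbb V^*})\|X\|_{\mathbb V}\le\|X\|_{\mathbb V}^2+2K_b+\|b(0)\|_{\mathbb V^*}^2$ is false; Young's inequality gives $(K_b+\|b(0)\|_{\mathbb V^*})^2$ instead. The constant $2K_b+\|b(0)\|_{\mathbb V^*}^2$ again comes from the one-sided bound, which gives $2\,{}_{\mathbb V^*}\langle b(X),X\rangle_{\mathbb V}\le 2K_b+\|b(0)\|_{\mathbb V^*}^2+\|X\|_{\mathbb V}^2$.
\item Your Cauchy--Schwarz step loses a factor $2$. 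With $\langle M\rangle_t\le 8C_\sigma t$, where $C_\sigma:=K_\sigma+\|\sigma(0)\|_{\mathscr L_2(\mathbb H)}^2$, it yields $\mathbb E\,\mathrm e^{\lambda M_t}\le\mathrm e^{8\lambda^2C_\sigma t}$, i.e.\ the factor $8\lambda+2$. Since $\langle M\rangle_t$ is deterministically bounded, you do not need Cauchy--Schwarz: $\mathbb E\,\mathrm e^{\lambda M_t}\le\mathrm e^{4\lambda^2C_\sigma t}\,\mathbb E\,\mathrm e^{\lambda M_t-\frac{\lambda^2}{2}\langle M\rangle_t}\le\mathrm e^{4\lambda^2C_\sigma t}$. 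This is the paper's step, and it gives $4\lambda+2$.
\end{enumerate}
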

 
\begin{proof}  Below we prove \eqref{T1} and \eqref{T2} respectively. 

(1) By {\bf (A)(2)} we have
$\bar B(x,x,x)=0,$ while \eqref{def5} with $\phi=0$  implies
$$\<X_t^x, \d L_t^x\>\le 0.$$
So, by It\^o's formula, we obtain
\beq\label{IT1} \beg{split} &\d \|X_t\|_\H^2 \le 2\<X_t^x,\si(X_t^x)\d W_t\>\\
&\qquad + \big(\|\si(X_t^x)\|_{\L_2(\H)}^2 +2_{\V^*}\<b(X_t^x), X_t^x\>_\V- 2 \|X_t^x\|_\V^2\big)\d t.\end{split} \end{equation}
Since $X_t^x\in D$ implies $\|X_t^x\|_\H\le 1$, by {\bf (A)}(1)  we obtain
\beq\label{TTB} \beg{split} & {_{\V^*}\<b(X_t^x),X_t^x\>_\V}= {_{\V^*}\<b(X_t^x)-b(0)+b(0), X_t^x\>_\V}\\
&\le  \|b(0)\|_{\V^*} \|X_t^x\|_\V+ K_b\|X_t\|_\H^2\\
&\le 
\ff 1 2 \|b(0)\|_{\V^*}^2 + K_b + \ff 1 2 \|X_t^x\|_{\V}^2,\end{split}\end{equation} and 
\beq\label{TTS} \beg{split} &\|\si(X_t^x)\|_{\L_2(\H)}^2 \le 2 \|\si(0)\|_{\L_2(\H)}^2 + 2 \|\si(X_t^x)-\si(0)\|_{\L_2(\H)}^2 \\
&\le 2 \|\si(0)\|_{\L_2(\H)}^2 + 2K_\si\|X_t^x\|_\H^2\le 2 \|\si(0)\|_{\L_2(\H)}^2 + 2K_\si.\end{split}\end{equation}
Combining \eqref{IT1}-\eqref{TTS} and $\|x\|_\H\le 1 $ for $x\in D$, we obtain 
$$\int_0^t \|X_s^x\|_{\V}^2\d s\le 1+ \big(\|b(0)\|_{\V^*}^2 + 2K_b+2 \|\si(0)\|_{\L_2(\H)}^2 + 2K_\si\big)t+ \int_0^t 2\<X_s^x,\si(X_s^x)\d W_s\>,$$
so  that for any $\ll>0$,
\beq\label{TT2}\E\big[\e^{\ll\int_0^t\|X_s^x\|_\V^2\d s}\big]\le \e^{\ll +\ll \big(\|b(0)\|_{\V^*}^2 + 2K_b+2 \|\si(0)\|_{\L_2(\H)}^2 + 2K_\si\big)t}\ \E\big[\e^{\ll  \int_0^t 2\<X_s^x,\si(X_s^x)\d W_s\>}\big].\end{equation} 
Moreover, \eqref{TTS}  and $\|X_s^x\|_\H\le 1$ imply
\beg{align*}&\E\big[\e^{\ll  \int_0^t 2\<X_s^x,\si(X_s^x)\d W_s\>}\big]\\
&\le \E\big[\e^{\ll  \int_0^t 2\<X_s^x,\si(X_s^x)\d W_s\>-2\ll^2 \int_0^t \|X_s^x\|_\H^2\|\si(X_s^x)\|_{\L_2(\H)}^2\d s} \big] \e^{4\ll^2 \big( \|\si(0)\|_{\L_2(\H)}^2 + K_\si\big)t} \\
&= \e^{4\ll^2 \big( \|\si(0)\|_{\L_2(\H)}^2 + K_\si\big)t}.\end{align*}
Combining this with \eqref{TT2} we derive \eqref{T1}.

(2) Let 
$$g_t=\e^{-2K_B^2\int_0^t\|X^x_s\|_\H^2\d s}.$$ By Ito's formula, we obtain  
$$
\d \big\{g_t\|X^x_t-Y^y_t\|_\H^4\big\} = \d M_t + g_t \big(I_1(t)+I_2(t)+I_3(t)\big)\d t + \d \tt L_t,$$ 
where $M_t$ is a martingale, and   
\beg{align*} &\d\tt L_t:= 4 g_t \|X_t^x-Y_t^y\|_\H^2\<X_t^x-Y_t^y, \d L_t^x-\d L_t^y\>,\\
&I_1(t):=-2K_B^2 \|X_t^x\|_\V^2 \|X_t^x-Y_t^y\|_\H^4 - 4 \|X_t^x-Y_t^y\|_\H^2\|X_t^x-Y_t^y\|_\V^2\\
&\qquad\qquad - 2\ll_{N+1}\|X_t^x-Y_t^y\|_\H^2\|\pi_N(X_t^x-Y_t^y)\|_\H^2,\\
&I_2(t):= 4\|X_t^x-Y_t^y\|_\H^2\,{_{\V^*}\< b(X_t^x)-b(Y_t^y),X_t^x-Y_t^y\>_\V} \\
&\qquad \qquad +4\sum_{i=1}^\infty \<X_t^x-Y_t^y, \si_i(X_t^x)-\si_i(Y_t^y)\>^2 \\
&\qquad \qquad+ 2 \|X_t^x-Y_t^y\|_\H^2\|\si(X_t^x)-\si(Y_t^x)\|_{\L_2(\H)}^2,\\
& I_3(t):= 4\|X_t^x-Y_t^y\|_\H^2 \,  {_{\V^*}\<  B(X_t^x,X_t^x)- B(Y_t^y,Y_t^y), X_t^x-Y_t^y\>_\V}.\end{align*} 
By \eqref{def5}, 
$$ \d \tt L_t\le 0,$$
By {\bf (A)}(1),
$$I_2(t)\le (4K_b+6 K_\si)\|X_t^x-Y_t^y\|_\H^4,$$
while {\bf (A)}(2) implies
$$\<B(y,y),x-y\>=\bar B(y, y, x)= \bar B(y,x,-y)=\bar B(y,x,x-y),\ \ \ x,y,z\in\V,$$ and 
\beg{align*} &|_{\V^*}\<B(x,x)-B(y,y), x-y\>_\V|= |\bar B(x,x,x-y)- \bar B(y,x,x-y)|\\
&\qquad =|_{\V^*}\<B(x-y,x, x-y\>_\V|\le K_B\|x\|_\V\|x-y\|_\V\|x-y\|_\H,\end{align*}
so that 
 \beg{align*}I_3(t)&\le 4K_B\|X_t^x-Y_t^y\|_\H^3 \|X_t^x\|_\V\|X_t^x-Y_t^y\|_\V\\
&\le 2 K_B^2\|X_t^x\|_\V^2 \|X_t^x-Y_t^y\|_\H^4 +  2\|X_t^x-Y_t^y\|_\H^2\|X_t^x-Y_t^y\|_\V^2.\end{align*}
Combining these with $\|X_t^x-Y_t^y\|_\V^2\ge \ll_{N+1} \|(1-\pi_N)(X_t^x-Y_t^y)\|_\H^2,$ we derive 
\beg{align*}&\d \big\{g_t\|X^x_t-Y^y_t\|_\H^4\big\}-\d M_t\\
& \le g_t\Big((4K_b+6K_\si) \|X_t-Y_t\|_\H^4-2  \|X_t^x-Y_t^y\|_\H^2\|X_t^x-Y_t^y\|_\V^2\\
&\qquad\qquad \qquad - 2\ll_{N+1}\|X_t^x-Y_t^y\|_\H^2\|\pi_N(X_t^x-Y_t^y)\|_\H^2\Big)\d t\\
&\le (4K_b+6K_\si- 2\ll_{N+1})  g_t\|X^x_t-Y^y_t\|_\H^4\d t.\end{align*}
Therefore,  by Gronwall's inequality, we prove \eqref{T2}. \end{proof}

\section{Application to reflected stochastic Navier-Stokes equations}
 
In this part, we apply the main result to  the  $d$-dimensional  stochastic Navier-Stokes Equations with reflection, over a  $C^1$ bounded open domain $U\subset \R^d$.

For any $p\in [1,\infty]$, let   $L^p=L^p(U, \mathbb{R}^d)$ be the   space of all $\mathbb{R}^d$-valued measurable functions $v$ 
defined on $U$ with 
$$\|v\|_{L^p}:= \big\||v|\big\|_{L^p(U)}<\infty,$$ where 
$\|\cdot\|_{L^p(U)}$ is the $L^p$-norm with respect to the Lebesgue measure on $U$.

Let $\DD$ be the Dirichlet Laplacian on $U$. For any $m\in\R$ and $q\ge 1$, let $H^{m,q}$ be the closure of $C_0^\infty(U;\R^d),$ 
the space of smooth $\R^d$-valued functions on $U$ with compact support, under the norm
  $$\|u\|_{H^{m,q}}:= \|(-\DD)^{\ff m 2} u\|_{L^q(U)}.$$
 
For $v\in L^1$, we denote ${\rm div} v=0$ if
$$\int_U \<v,\nn f\>(x)\d x=0,\ \ f\in C_0^\infty(U),$$
where $C_0^\infty(U)$ is the set of all smooth functions on $U$ with compact support. 

Let
$$\H:=\big\{u\in L^2:\ {\rm div}u=0\big\},\ \ \<u,v\>:= \<u,v\>_{L^2},\ \ \ u,v\in \H.$$ 
We consider  the following SPDE  on $D:= \{u\in \H:\ \|u\|_\H:=\|u\|_{L^2}\le 1\}$ with refection: 
\beq \label{SEE'}\beg{split}
 &\d X_t^u= \big\{b(X_t^u)+(X_t^u\cdot \nn) X_t^u- \nu (-\DD)^\theta X_t^u\big\}\d t + \sigma(X_t^u)\d W_t+\d L_t^u, \\
 & \ \ \ t\ge 0,\  X_0 =u\in {D},\end{split}
\end{equation} where $\nu,\theta \in (0,\infty)$   are  constants, and $b,\si$ are to be determined latter on.

To apply Theorem \ref{harnack}, we take 
\beq\label{AB}A= \nu (-\DD)^\theta,\ \ \ B(u,v)= (u\cdot\nn)v\ \text{for}\ u,v\in \V,\end{equation}
where $$ \V:=\{u\in H^{\theta,2}: \ {\rm div}u=0\big\},\ \ \|u\|_\V= \|A^{\ff 1 2}u\|_{L^2}=\nu^{\ff 1 2} \|u\|_{H^{\theta,2}}.$$
Let $\V^*$ be the dual space of $\V$ with respect to $\H$, which is the closure of $\H$ with respect to the norm
$$\|u\|_{\V^*}:= \nu^{-\ff 1 2} \|u\|_{H^{-\theta,2}}.$$
It is classical that $(A,\D(A))$   is a positive definite self-adjoint operator on $\H$ with eigenvalues $\{\ll_i\}_{i\ge 1}$ satisfying
$$c_1 i^{\ff {2\theta} d}\le \ll_i\le c_2 i^{\ff {2\theta} d},\ \ i\ge 1$$
 for some constants $c_2>c_1>0,$ so that \eqref{LLM} holds.

Moreover, it is easy to see that 
$$_{\V^*}\<(u\cdot \nn)v, w\>_{\V}= - {_{\V^*}\<(u\cdot \nn)w, v\>_{\V}},\ \ _{\V^*}\<(u\cdot \nn)v, v\>_{\V} =0,\ \ \ \ u,v,w\in \V.$$
So, the following lemma ensures that $B(u,v):= (u\cdot\nn)v$ satisfies {\bf (A)}(2).

\beg{lem}\label{LN1} Let $B(u,v):= (u\cdot\nn)v$ for $u,v\in \V$. If $\theta\ge \ff{d+2}4\lor 1,$ then there exist  constants $C,C_B\in (0,\infty)$ such that
\beq\label{YY}\beg{split}
&|_{H^{-\theta,2}}\<B(u,v), w\>_{H^{\theta,2}}|\le C\|v\|_{H^{\theta,2}}\ss{\|u\|_{L^2}\|u\|_{H^{\theta,2}}\|w\|_{L^2}\|w\|_{H^{\theta,2}}},\\\
&\|B(u,v)\|_{H^{-\theta,2}}\le C_B\|u\|_{L^2}\|v\|_{H^{\theta,2}},\ \ \ u,v,w\in H^{\theta,2}.\end{split}\end{equation}
\end{lem} 

\beg{proof} 
 By the Sobolev inequality,  there exists a constant $c_1\in (0,\infty)$ such that
  \beq\label{SB0} \|u\|_{H^{m,q}}\le c_1\|u\|_{H^{k,p}}\end{equation}
  holds for any $\infty>k\ge m>-\infty$    and $\infty\ge p,q\ge 1$ satisfying
  \beq\label{SB1} \ff 1 q+\ff{k-m}d\ge \ff 1 p.\end{equation}
  
  It is easy to see that \eqref{SB1} holds for
  $$m=\theta,\ \ \ q =2,\ \ \ k= 0,\ \ \ p= \ff{2d}{d+2\theta}\lor 1,$$
  so that \eqref{SB0} implies
  $$\|B(u,v)\|_{H^{-\theta,2}}\le c_1 \|B(u,v)\|_{L^{\ff{2d}{d+2\theta}\lor 1}}.$$
  Combining this with   H\"older's inequality we obtain
$$\|B(u,v)\|_{H^{-\theta,2}}\le c_1 \|u\|_{L^2} \|\nn v\|_{L^{\ff d {\theta}\lor 2}}=c_1 \|u\|_{\H} \|\nn v\|_{L^{\ff d {\theta}\lor 2}}.$$
Noting that 
$$\|\nn v\|_{L^{\ff d {\theta}\lor 2}}\le c_2 \|u\|_{L^2} \|v\|_{H^{1,\ff d {\theta}\lor 2}}$$
  for some constant $c_2\in (0,\infty)$, we derive 
    \beq\label{BV2} \|B(u,v)\|_{H^{-\theta, 2}}\le c_1 \|(u\cdot \nn)v\|_{L^{\ff{2d}{d+2\theta}\lor 1}}\le c_1c_2\|u\|_{\H} \|v\|_{H^{1,\ff d {\theta}\lor 2}}.\end{equation}
  Moreover,   since 
 $\theta\ge  \ff{d+2}4\lor 1$ implies  \eqref{SB1} for
  $$m=1,\ \ \ q=  \ff d {\theta}\lor 2,\ \ \ k= \theta,\ \ \ p=2,$$
  so  by \eqref{SB0} we obtain
  $$ \|v\|_{H^{1,\ff d {\theta}\lor 2}}\le c_1 \|v\|_{H^{\theta,2}}.$$
  Combining this with \eqref{BV2} we derive the second inequality in \eqref{YY}.
  
     Next,   it is clear that 
  $$|_{H^{-\theta,2}}\<B(u,v), w\>_{H^{\theta,2}}|\le   \|B(u,v)\|_{L^2} \|w\|_{L^2} = \|B(u,v)\|_{L^2} \|w\|_\H.$$
  Combining this with the second inequality in \eqref{YY} which is just proved, it remains to find a constant $c\in (0,\infty)$ such that
  \beq\label{**} \|B(u,v)\|_{L^2}\le c \|u\|_{H^{\theta,2}}\|v\|_{H^{\theta,2}}.\end{equation}
  Below we prove this estimate by considering two different situations.
  
  (a) Let $\theta<\ff{d+2}2.$ Then 
 $$q_1:=\ff{2d}{d+2-2\theta}\in [2, \infty),\ \ \ q_2:=\ff{2q_1}{q_1-2}\in (2,\infty]$$
 satisfies $\ff 1 {q_1}+\ff 1 {q_2}=\ff 1 2.$ 
 By the $L^q$-boundedness of Riesz transform $\nn (-\DD)^{-\ff 1 2}$ for $q\in (1,\infty)$, there exists a constant $c_3\in (0,\infty)$ such that  
 $$\|\nn v\|_{L^{q_1}}\le c_3 \|v\|_{H^{1,q_1}}.$$
Combining this with  H\"older's inequality,  we obtain
\beq\label{BW} \|B(u,v)\|_{L^2} \le \|u\|_{L^{q_2}}\|\nn v\|_{L^{q_1}}\le c_3 \|u\|_{H^{0,q_2}}\|v\|_{H^{1,q_1}}.\end{equation}
It is easy to see that
$$\ff 1 {q_1}+\ff{\theta-1}d =\ff 1 2$$
and $\theta\ge \ff{d+2}4$ implies
$$\ff{1}{q_2} +\ff\theta d\ge \ff 1 2.$$
Then \eqref{SB0} holds for $(m,q,k,p)=(0, q_2, \theta, 2)$ or $(1, q_1,\theta, 2)$, so that \eqref{BW} yields
\eqref{**} for some constant $c\in (0,\infty).$

(b) Let $\theta\ge \ff{d+2}2$. Then 
\beq\label{BW'} \|B(u,v)\|_{L^2} \le \|u\|_{L^{4}}\|\nn v\|_{L^{4}}\le c_3 \|u\|_{H^{0,4}}\|v\|_{H^{1,4}}\end{equation}
holds for some constant $c_3\in (0,\infty)$. Noting that 
when $\theta\ge \ff{d+2}2$ the condition $\eqref{SB1}$ holds for
$(m,q,k,p)= (0,4,\theta,2)\ \text{or}\ (1,4,\theta,2)$, we deduce  \eqref{**} from \eqref{BW'} and \eqref{SB0}. 
\end{proof}

Now, let 
$$b: \V\to\V^*\ \ \ \ \si: \H\to \L_2(\H)$$ be measurable satisfying the following assumption.

 \beg{enumerate}\item[{\bf (B)}] \emph{There exists constants $C_b, K_\si\in (0,\infty)$ such that
 \beg{align*}&\|b(x)-b(y)\|_{H^{-\theta,2}}\le C_b \|x-y\|_\H,\\
 &\|\si(x)-\si(y)\|_{\L_2(\H)}^2\le K_\si \|x-y\|_\H^2,\ \ x,y\in \H.\end{align*}}
 \end{enumerate} 
 
Note that
$$\|\cdot\|_\V=\nu^{\ff 1 2}\|\cdot\|_{H^{\theta,2}},\ \ \ \|\cdot\|_{\V^*}=\|\cdot\|_{H^{-\theta,2}}.$$
By Lemma \ref{LN1},  the following result is a direct consequence of Proposition \ref{P2.1} and Theorem \ref{harnack}, which in particular applies to $A=-\DD$ (i.e. $\theta=1$) when $d\le 2$. 
   
 \beg{thm}\label{TB} Assume {\bf (B)} and let $A,B$ be in $\eqref{AB}$ for some constants $\nu\in (0,\infty)$ and $\theta\in [1\lor \ff {d+2}4,\infty)$.
 Then $\eqref{SEE'}$ is well-posed for any initial value $x\in \H$. 
 
 Moreover, let $C_B,C_b$ and $K_\si$ be in Lemma $\ref{LN1} $ {\bf (B)},   let 
 $$K_B= \nu^{-1} C_B,\ \ \ K_b= \nu^{-\ff 1 2} C_b,\ \ \ \|b(0)\|_{\V^*}=\nu^{-\ff 1 2} \|b(0)\|_{H^{-\theta,2}},$$ 
 and let $r(N)$ be defined in  Theorem $\ref{harnack}$.   If there exists $N\in \mathbb N$ such that ${\bf (A_N)}$ and $r(N)>0$ hold, then all assertions 
 in Theorem $\ref{harnack}$ hold for $\eqref{SEE'}$.
  \end{thm}
  
 \begin{rem} By repeating the above  argument for $A:= \nu (1-\DD)^\theta$ on $\mathbb T^d$, where   $\nu\in (0,\infty)$ and $\theta\in[ 1\lor\ff{d+2}4,\infty),$ 
 Theorem $\ref{TB}$ also holds with
 \beg{align*} &\H:= \big\{u\in L^2(\mathbb T^d,\R^d): \ {\rm div} u=0\big\},\\
 &\V:= \big\{u\in H^{\theta,2}(\mathbb T^d,\R^d): \ {\rm div} u=0\big\},\end{align*}
 $\V^*$ being the dual space of $\V$ with respect to $\H$, and $b,\si$ satisfying {\bf (B)}. \end{rem}
   
\noindent{\bf Acknowledgement}. This work is partially supported by National Key R$\&$D program of China (No. 2022 YFA1006001)), 
National Natural Science Foundation of China (Nos. 12531007, 12131019).

\noindent{\bf Data availability}.

No data was used for the search described in the article.

\noindent{\bf Disclosure statement}.

We declare that we have no conflict of interest.

\noindent{\bf declaration of interest}.

The authors do not work for, advise, own shares in, or receive funds from any organization that could benefit from this article, and have declared no affiliations other than their research organizations.

\end{document}